\numberwithin{equation}{section}
\newtheorem{theorem}{Theorem}[section]
\newtheorem{lemma}{Lemma}[section]
\theoremstyle{remark}
\title[Some new inequalities for univalent functions]{Some new inequalities for univalent functions}
\subjclass[2010]{30C45}
\keywords{Analytic; Univalent; Nunokawa's lemma; Starlike; Close--to--convex.}
\begin{document}
\begin{abstract}
By use of a modified Nunokawa's lemma, we obtain some new conditions for univalence. Also, some sharp inequalities concerning univalent functions are presented.
\end{abstract}

\author[M.M. Motamedinezhad and R. Kargar] {M. M. Motamedinezhad and R. Kargar}
\address{Department of Mathematics, University of Applied Science and Technology, Tehran, Iran}
\email{motamedi@uast.ac.ir}
\address{Young Researchers and Elite Club,
Ardabil Branch, Islamic Azad University, Ardabil, Iran}
\email{rkargar@pnu.ac.ir, rkargar1983@gmail.com}

\maketitle

\section{Introduction}
Let $\Delta$ be the open unit disc on the complex plane $\mathbb{C}$. We denote by $\mathfrak{B}$ the class of functions $w(z)$ analytic in $\Delta$ with $w(0)=0$ and $|w(z)|<1$, and denote by $\mathcal{A}$ the class of all functions that are analytic and normalized in $\Delta$. A function belongs to the class $\mathcal{A}$ has the following form
\begin{equation}\label{f}
  f(z)=z+a_2z^2+\cdots+ a_n z^n+\cdots\quad (z\in\Delta).
\end{equation}
The subclass of $\mathcal{A}$ consisting of univalent functions in $\Delta$ is denoted by $\mathcal{S}$.
Also we denote by $\mathcal{S}^*$ the class of starlike functions and denote by $\mathcal{S}^*(\mu)$ the class of all starlike functions of order $\mu$, $0\leq \mu<1$. Put $\mathcal{S}^*(0)\equiv\mathcal{S}^*$.
Analytically $f\in \mathcal{S}^*(\mu)$ if, and only if ${\rm Re} \left\{zf'(z)/f(z)\right\}>\mu$.
Also, we say that the function $f$ is convex of order $\mu$ in $\Delta$, if and only if $zf'(z)\in \mathcal{S}^*(\mu)$. The class of convex functions of order $\mu$ is denoted by $\mathcal{K}(\mu)$ and the class of convex functions is denoted by $\mathcal{K}\equiv\mathcal{K}(0)$.
Let $\mathcal{M}(\delta)$ be the subclass
of $\mathcal{A}$ consisting of functions $f$ which satisfy
\begin{equation*}
  {\rm Re} \left\{\frac{zf'(z)}{f(z)}\right\}< \delta\quad(z\in \Delta),
\end{equation*}
where $\delta>1$. Also, we say that $f\in \mathcal{N}(\delta)$ if and only if $zf'(z)\in \mathcal{M}(\delta)$. The class $\mathcal{M}(\delta)$ and $\mathcal{N}(\delta)$ for $1<\delta\leq4/3$ were introduced by Uralegaddi $et$ $al.$ \cite{Ura}. Also, the class $\mathcal{N}(\delta)$ including locally univalent functions was studied extensively by Kargar $et$ $al$., \cite{kargarJMAA} (see also, \cite{Mah}) and they proved that $\mathcal{N}(3/2)\subset \mathcal{S}$. Thus, $\mathcal{M}(3/2)$ and $\mathcal{N}(3/2)$ are a subclass of univalent functions.
A function $f\in \mathcal{A}$ is said to be close--to--convex, if there exists
a convex function $g$ such that
\begin{equation*}
  {\rm Re}\left\{\frac{f'(z)}{g'(z)}\right\}>0\quad(z\in\Delta).
\end{equation*}
This class was introduced by Kaplan in 1952 \cite{Kaplan} and we denote by $\mathcal{CK}$.
If we take $g(z)\equiv z$, then the class $\mathcal{CK}$ becomes the Noshiro--Warschawski class as follows
\begin{equation*}
  \mathcal{C}:=\{f\in \mathcal{A}: {\rm Re}\{f'(z)\}>0, \, z\in\Delta\}.
\end{equation*}
Note that $\mathcal{C}\subset\mathcal{S}$, by the basic Noshiro--Warschawski lemma \cite[\S 2.6]{Duren}.

In 1977, Chichra \cite{Chichra} studied the class of all functions whose derivative has positive real part in the unit disc $\Delta$. Indeed, he denoted by $\mathcal{F}_\gamma$ the class of functions $f\in\mathcal{A}$ which satisfying the following inequality
\begin{equation*}
  {\rm Re}\left\{f'(z)+\gamma zf''(z)\right\}>0\quad(z\in\Delta),
\end{equation*}
where $\gamma\geq0$, and showed that $\mathcal{F}_\gamma\subset \mathcal{S}$. Also, he proved that if $f\in\mathcal{F}_\gamma $ and ${\rm Re}\{\gamma\}\geq0$, then ${\rm Re}\{f'(z)\}>0$ in $\Delta$. Recent result, also was obtained by Lewandowski $et$ $al.$, see \cite{lewa et al}.
On the other hand, Gao and Zhou \cite{gaozhou} considered the class $R(\beta,\gamma)$ as follows:
\begin{equation*}
  R(\beta,\gamma)=\left\{f\in\mathcal{A}:{\rm Re}\left\{f'(z)+\gamma zf''(z)\right\}>\beta,\ \ \gamma>0,\, \beta<1,\, z\in \Delta\right\}.
\end{equation*}
They found the extreme points of $ R(\beta,\gamma)$, some sharp bounds of certain
linear problems, the sharp bounds for ${\rm Re}\{f'(z)\}$ and ${\rm Re}\{f(z)/z\}$ and determined the number
$\beta(\gamma)$ such that $R(\beta,\gamma)\subset\mathcal{S}^*$, where $\gamma$ is certain fixed number in $[1,\infty)$. Note that a generalization of the class $R(\beta,\gamma)$ was studied by Srivastava et al. in \cite{sridorzap}.

To prove of our main results we need the following lemma.
\begin{lemma}\label{lem Nuno}
  {\rm(}Simple generalization of Nunokawa's lemma \cite{Nuno}{\rm)} Let $p(z)$ be an analytic function in $|z| <1$ of the form
  \begin{equation*}
    p(z)=1+\sum_{n=m}^{\infty} c_n z^n\quad(c_m\neq0),
  \end{equation*}
  with $p(z) \neq 0$ in $|z| <1$. If there exists a point $z_0$, $|z_0| <1$, such that
  \begin{equation*}
    {\rm Re} \{p(z)\}>0\quad for\quad |z|<|z_0|
  \end{equation*}
  and
    \begin{equation*}
    {\rm Re} \{p(z)\}=0, \quad a=|p(z_0)|\neq0,
  \end{equation*}
  then we have
  \begin{equation*}
    \frac{z_0p'(z_0)}{p(z)}=i k,
  \end{equation*}
  where $k$ is real number and
  \begin{equation*}\label{l>}
    k\geq \frac{m}{2}\left(a+\frac{1}{a}\right)\geq m\geq1\ \ when \ \ p(z_0)=i a
  \end{equation*}
  and
    \begin{equation*}\label{l<}
    k\leq -\frac{m}{2}\left(a+\frac{1}{a}\right)\leq -m\leq-1\ \ when \ \ p(z_0)=-ia.
  \end{equation*}
  \end{lemma}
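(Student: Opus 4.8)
The plan is to reduce the statement to the classical Jack--Clunie boundary lemma by pushing the half--plane condition $\mathrm{Re}\{p(z)\}>0$ through a Möbius transformation that carries the right half--plane onto the unit disc. Concretely, I would set
\[
  w(z)=\frac{1-p(z)}{1+p(z)}.
\]
Since $\mathrm{Re}\{p(z)\}\geq 0$ on $|z|\leq|z_0|$ (with strict inequality for $|z|<|z_0|$), the denominator $1+p(z)$ never vanishes there, so $w$ is analytic on a neighbourhood of the closed disc $|z|\leq|z_0|$. First I would record the three structural facts that make the argument run: $w(0)=0$ because $p(0)=1$; the expansion $p(z)=1+c_mz^m+\cdots$ forces $w(z)=-\tfrac{c_m}{2}z^m+\cdots$, so $w$ has a zero of order exactly $m$ at the origin; and the mapping property of the Möbius transform gives $|w(z)|<1$ for $|z|<|z_0|$ together with $|w(z_0)|=1$, the latter precisely because $\mathrm{Re}\{p(z_0)\}=0$.

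Thus $|w|$ attains its maximum over $|z|\leq|z_0|$ at the boundary point $z_0$, which is exactly the hypothesis of the refined Jack's lemma for a function with a zero of order $m$ at the origin. Applying it yields a real number $c$ with
\[
  \frac{z_0w'(z_0)}{w(z_0)}=c,\qquad c\geq m\geq 1.
\]
This is the step I expect to carry the real content: obtaining the sharp lower bound $c\geq m$ (rather than merely $c\geq 1$) is precisely where the order--$m$ vanishing of $w$ is used, and it is the feature that produces the factor $m$ in the final estimate.

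It then remains to translate this identity back to $p$. Differentiating the relation gives $w'=-2p'/(1+p)^2$, whence a short computation shows
\[
  c=\frac{z_0w'(z_0)}{w(z_0)}=\frac{-2\,z_0p'(z_0)}{1-p(z_0)^2}.
\]
In the case $p(z_0)=ia$ we have $p(z_0)^2=-a^2$, so $1-p(z_0)^2=1+a^2$; solving for the quotient $z_0p'(z_0)/p(z_0)$ I would obtain
\[
  \frac{z_0p'(z_0)}{p(z_0)}=i\,\frac{c}{2}\left(a+\frac1a\right)=:ik,
\]
which is purely imaginary with $k=\tfrac{c}{2}(a+1/a)$. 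Since $c\geq m$ and $a+1/a\geq 2$ by the arithmetic--geometric mean inequality, this gives $k\geq\frac{m}{2}(a+1/a)\geq m\geq 1$, as claimed. The symmetric computation with $p(z_0)=-ia$ merely flips the sign and yields $k\leq-\frac{m}{2}(a+1/a)\leq-m\leq-1$, completing the two cases.
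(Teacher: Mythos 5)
The paper does not actually prove this lemma: it is quoted verbatim as a known result, with the proof delegated to Nunokawa's original article \cite{Nuno}, so there is no in-paper argument to compare against. Your proof is correct and is essentially the standard derivation of Nunokawa's lemma: the Cayley transform $w=(1-p)/(1+p)$ converts the half-plane hypothesis into the hypothesis of the order-$m$ refinement of Jack's lemma (a zero of exact order $m$ at the origin because $c_m\neq 0$, modulus below $1$ inside $|z|<|z_0|$, modulus equal to $1$ at $z_0$), and the identity $z_0w'(z_0)/w(z_0)=-2z_0p'(z_0)/(1-p(z_0)^2)$ transports the conclusion back. Your worry that Jack's lemma "only" gives $c\geq m$ is unfounded, and your own computation shows why: the extra factor $\tfrac12(a+1/a)$ is produced by the Möbius change of variable (since $1-p(z_0)^2=1+a^2$ and $|p(z_0)|=a$), not by the boundary lemma, so $k=\tfrac{c}{2}(a+1/a)\geq\tfrac{m}{2}(a+1/a)$ is exactly the claimed bound. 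The one ingredient you import, $z_0w'(z_0)/w(z_0)=c\in\mathbb{R}$ with $c\geq m$ for a function vanishing to order $m$ at the origin whose modulus is maximized at the boundary point $z_0$, is standard (it follows from the Schwarz-type estimate $|w(z)|\leq M|z/z_0|^{m}$ together with the radial derivative of $\log|w|$ at $z_0$; see Miller and Mocanu's boundary lemma), so citing it is legitimate, though a fully self-contained write-up should include that two-line argument. Two minor points: you should note explicitly that $|w|\leq 1$ on the closed disc $|z|\leq|z_0|$ by continuity, so that $z_0$ really is a maximum point as Jack's lemma requires; and the displayed conclusion in the statement contains a typo, $z_0p'(z_0)/p(z)$ should read $z_0p'(z_0)/p(z_0)$, which is how you (correctly) interpreted it.
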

In the present paper, motivated by the works \cite{NS1} and \cite{NS2} we shall determine some new sufficient conditions for close--to--convexity. Also, some another applications of Nunokawa's lemma concerning the class $\mathcal{M}(3/2)$, $\mathcal{N}(3/2)$ and $\mathcal{S}^*(1/2)$ are presented.


\section{Application of Nunokawa's lemma}\label{Appl. Nunokawa's lemma}

In this section, we present some applications of Nunokawa's lemma. The first result is as follows which it introduces new conditions for close--to--convexity, hence univalence.
\begin{theorem}
  Let $f$ be of the form \eqref{f}. If $f$ satisfies
  \begin{equation}\label{eq. f'+zf''<01}
    {\rm Re}\left\{f'(z)+zf''(z)\right\}\leq 1\quad(z\in\Delta),
  \end{equation}
  or
  \begin{equation}\label{eq. f'+zf''<1}
    {\rm Re}\left\{f'(z)+zf''(z)\right\}> 1\quad(z\in\Delta),
  \end{equation}
  then
    \begin{equation*}
   {\rm Re}\left\{\frac{1}{f'(z)}\right\}>\frac{1}{2}\quad(z\in\Delta)
 \end{equation*}
 and
  \begin{equation}\label{|f'(z)-1|<1}
  |f'(z)-1|<1 \quad (z\in\Delta).
  \end{equation}
  The inequality \eqref{|f'(z)-1|<1} means that $f$ is close--to--convex (hence univalent).
\end{theorem}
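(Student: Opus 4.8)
The plan is to prove the single inequality ${\rm Re}\{1/f'(z)\}>1/2$ and deduce the two displayed conclusions from it. First I would note that these conclusions are equivalent: the M\"obius map $w\mapsto 1/w$ sends the half--plane ${\rm Re}\,\zeta>1/2$ onto the disc $|w-1|<1$ (the line ${\rm Re}\,\zeta=1/2$ going to the circle $|w-1|=1$ through $0$ and $2$), so
\begin{equation*}
{\rm Re}\left\{\frac{1}{f'(z)}\right\}>\frac12\iff|f'(z)-1|<1.
\end{equation*}
Because the disc $|w-1|<1$ lies in the right half--plane, $|f'(z)-1|<1$ forces ${\rm Re}\,f'(z)>0$; taking $g(z)\equiv z$ in the definition of close--to--convexity then gives $f\in\mathcal{C}\subset\mathcal{S}$. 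Hence it suffices to prove ${\rm Re}\{1/f'(z)\}>1/2$ in $\Delta$.

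To do this I would set $p(z)=1/f'(z)$, analytic with $p(0)=1$, and put $P(z)=2p(z)-1=2/f'(z)-1$, so that $P(0)=1$, $P$ has the form demanded by Lemma~\ref{lem Nuno} with $m=1$, and ${\rm Re}\,P>0\iff{\rm Re}\,p>1/2$. Arguing by contradiction, if ${\rm Re}\{1/f'\}>1/2$ failed then, since ${\rm Re}\,P(0)=1>0$, there would be a first point $z_0$ of least modulus with ${\rm Re}\,P(z_0)=0$ and ${\rm Re}\,P>0$ on $|z|<|z_0|$. Writing $p(z_0)=\tfrac12+i\beta$ gives $P(z_0)=2i\beta=\pm ia$ with $a=2|\beta|$, and Lemma~\ref{lem Nuno} supplies $z_0P'(z_0)/P(z_0)=ik$ with $k\ge\tfrac12(a+1/a)\ge1$ if $\beta>0$ and $k\le-\tfrac12(a+1/a)\le-1$ if $\beta<0$.

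Next I would feed this back into $f$. Since $P=2p-1$ one finds $z_0p'(z_0)=-k\beta$, and using $f'=1/p$, $f''=-p'/p^2$,
\begin{equation*}
f'(z_0)+z_0f''(z_0)=\frac{p(z_0)-z_0p'(z_0)}{p(z_0)^2}=\frac{(\tfrac12+k\beta)+i\beta}{(\tfrac12+i\beta)^2}.
\end{equation*}
Taking real parts and simplifying with $(\tfrac14-\beta^2)^2+\beta^2=(\tfrac14+\beta^2)^2$ yields
\begin{equation*}
{\rm Re}\{f'(z_0)+z_0f''(z_0)\}=\frac{\tfrac18+\tfrac12\beta^2+k\beta(\tfrac14-\beta^2)}{(\tfrac14+\beta^2)^2}.
\end{equation*}
The aim is then to show that the constraint on $k$ from the lemma makes this value incompatible with the hypothesis in force. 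Inserting the extremal choice $k=\beta+\frac1{4\beta}$ (or its negative) collapses the $k$--term via $(\beta^2+\tfrac14)(\tfrac14-\beta^2)=\tfrac1{16}-\beta^4$, and a one--line comparison then shows the real part exceeds $1$ exactly when $|\beta|<\tfrac12$ and is at most $1$ when $|\beta|\ge\tfrac12$; matching these two regimes against \eqref{eq. f'+zf''<01} and \eqref{eq. f'+zf''<1} is meant to rule out $z_0$ in each case.

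The step I expect to be the main obstacle is precisely this final sign analysis: one must confirm that the operative hypothesis excludes \emph{every} admissible pair $(\beta,k)$ at a first critical point, not merely some. Since the coefficient $\beta(\tfrac14-\beta^2)$ of $k$ changes sign at $|\beta|=\tfrac12$, the monotonicity of the real part in $k$ reverses, so the sub-cases $P(z_0)=ia$ and $P(z_0)=-ia$ must be tracked together with the regimes $|\beta|\lessgtr\tfrac12$, and the degenerate possibility $a\to0$ (that is $f'(z_0)=2$, $\beta=0$) handled by a separate limiting argument. As a guide and cross--check I would keep in mind that ${\rm Re}\{f'+zf''\}$ equals $1$ at the origin for every $f$ of the form \eqref{f}, so both hypotheses are extremal at $z=0$; this is what fixes the threshold $1/2$ and, via the maximum principle applied to the harmonic function ${\rm Re}\{(zf'(z))'\}$, already suggests the cleanest route to the conclusion.
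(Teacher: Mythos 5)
Your proposal follows essentially the same route as the paper's proof: the same test function $P(z)=2/f'(z)-1$, the same contradiction at a first point where ${\rm Re}\,P(z_0)=0$, the same application of Lemma~\ref{lem Nuno}, and the same evaluation of ${\rm Re}\{f'(z_0)+z_0f''(z_0)\}$; your expression in $\beta$ agrees with the paper's $\frac{2(1+ka)(1-a^2)+4a^2}{(1+a^2)^2}$ under the substitution $a=2\beta$, and your threshold $|\beta|=1/2$ is exactly the paper's case split at $a=1$. Every computation you actually carry out checks against the paper.

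The ``main obstacle'' you flag at the end is, however, a genuine gap --- and it is equally present in the paper's own proof. Because the coefficient of $k$ is $\beta(\tfrac14-\beta^2)$, the regime $0<|\beta|<\tfrac12$ (i.e.\ $0<a<1$) makes the real part exceed $1$ and therefore contradicts only \eqref{eq. f'+zf''<01}, while $|\beta|\ge\tfrac12$ (i.e.\ $a\ge1$) makes it at most $1$ and therefore contradicts only \eqref{eq. f'+zf''<1}. Since the hypothesis is a disjunction, under \eqref{eq. f'+zf''<01} alone the case $a\ge1$ is not excluded, and under \eqref{eq. f'+zf''<1} alone the case $a<1$ is not excluded; neither single hypothesis rules out every admissible pair $(\beta,k)$, and the paper simply invokes whichever inequality is convenient in each case. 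Your closing aside is in fact the clean way out: ${\rm Re}\{f'(z)+zf''(z)\}={\rm Re}\{(zf'(z))'\}$ is harmonic and equals $1$ at $z=0$ for every $f$ of the form \eqref{f}, so \eqref{eq. f'+zf''<1} is satisfied by no function at all, while \eqref{eq. f'+zf''<01} forces, by the maximum principle, $(zf'(z))'\equiv1$ and hence $f(z)=z$, for which both conclusions are immediate. If you want an argument that is actually airtight, develop that one--paragraph observation rather than the Nunokawa route; as written, neither your plan nor the paper's proof closes the disjunction.
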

\begin{proof}
  Let us define the function $p(z)$ as follows
  \begin{equation}\label{p(z) f'(z)}
    p(z)=\frac{2}{f'(z)}-1\quad(z\in\Delta),
  \end{equation}
  where $f\in \mathcal{A}$. Then $p$ is analytic function in $\Delta$ and $p(0)=1$. With a simple computation, \eqref{p(z) f'(z)} implies that
  \begin{equation}\label{f' p}
    f'(z)=\frac{2}{1+p(z)}\quad(z\in\Delta)
  \end{equation}
  and
  \begin{equation}\label{1+zf'' p(Z)}
    1+\frac{zf''(z)}{f'(z)}=1-\frac{zp'(z)}{1+p(z)}\quad(z\in\Delta).
  \end{equation}
  Now, from \eqref{f' p} and \eqref{1+zf'' p(Z)}, we get
  \begin{align}\label{f'+zf''}
    f'(z)+zf''(z)&=f'(z)\left(1+\frac{zf''(z)}{f'(z)}\right)\\
    &=\left(\frac{2}{1+p(z)}\right)\left(1-\frac{zp'(z)}{p(z)}
    \frac{p(z)}{1+p(z)}\right).\nonumber
  \end{align}
  Suppose that there exists a point $z_0\in\Delta$ so that
  \begin{equation*}
    |f'(z)-1|<1\quad(|z|<|z_0|)
  \end{equation*}
  and
    \begin{equation*}
    |f'(z_0)-1|=1.
  \end{equation*}
  Then applying \eqref{|f'(z)-1|<1}, we obtain ${\rm Re}\{p(z)\}>0$ for $|z|<|z_0|$ and ${\rm Re}\{p(z_0)\}=0$. Also $p(z_0)\neq0$. Now, by using of Nunokawa's lemma, we have
  \begin{equation*}\label{z0p'z0}
    \frac{z_0p'(z_0)}{p(z_0)}=ik,
  \end{equation*}
  where
  \begin{equation}\label{kgeq1}
    k\geq \frac{1+a^2}{2a}\quad{\rm when}\quad p(z_0)=ia
  \end{equation}
  and
  \begin{equation*}\label{kleq1}
    k\leq -\frac{1+a^2}{2a}\quad{\rm when}\quad p(z_0)=-ia.
  \end{equation*}
  First, we investigate the case $p(z_0)=ia$. From \eqref{f'+zf''}, we have
  \begin{align*}
    {\rm Re}\left\{f'(z_0)+z_0f''(z_0)\right\}
    &={\rm Re}\left\{\left(\frac{2}{1+p(z_0)}\right)\left(1-\frac{z_0p'(z_0)}{p(z_0)}
    \frac{p(z_0)}{1+p(z_0)}\right)\right\}\\
    &={\rm Re}\left\{\left(\frac{2}{1+ia}\right)
    \left(1-ik.\frac{ia}{1+ia}\right)\right\}\\
    &={\rm Re}\left\{\frac{2(1+ka)+2ia}{(1+ia)^2}\right\}\\
    &=\frac{2(1+ka)(1-a^2)+4a^2}{(1+a^2)^2}=:k(a).
  \end{align*}
  Since $a>0$, the following two cases arise.\newline
  {\bf Case 1.} Let $0<a<1$. Then by \eqref{kgeq1}, we get
  \begin{equation*}\label{h}
    k(a)\geq \frac{3+2a^2-a^4}{(1+a^2)^2}=:h(a).
  \end{equation*}
  It is easy to see that $h$ is decreasing and thus
 \begin{equation*}
   h(a)>h(1)=1.
 \end{equation*}
 But, this is contradictory with assumption \eqref{eq. f'+zf''<01}. Therefore we have
 \begin{equation*}
   {\rm Re}\{p(z)\}>0\quad(z\in\Delta)
 \end{equation*}
 or equivalently
  \begin{equation*}
   {\rm Re}\left\{\frac{1}{f'(z)}\right\}>\frac{1}{2}\quad(z\in\Delta).
 \end{equation*}
 Finally, from \eqref{p(z) f'(z)} we have
\begin{equation*}
  |f'(z)-1|=\left|\frac{1-p(z)}{1+p(z)}\right|<1\quad(z\in\Delta).
\end{equation*}
{\bf Case 2.} Let $a\geq1$. Then by \eqref{kgeq1} we have $k(a)\leq h(a)$. Because $h$ is decreasing function and $a\geq1$ we get $k(a)\leq h(a)\leq h(1)=1$. This contradicts the hypothesis \eqref{eq. f'+zf''<1} and thus we have the inequality \eqref{|f'(z)-1|<1}. Note that the proof of the case $p(z_0)=-ia$ is similar and therefore we omit the details. The proof of this theorem here ends.
\end{proof}
\begin{theorem}
  Let $f$ be of the form \eqref{f} and satisfies
  \begin{equation}\label{re zf' f 32}
    {\rm Re}\left\{\frac{zf'(z)}{f(z)}\right\}< \frac{3}{2}\quad(z\in \Delta).
  \end{equation}
  Then
    \begin{equation*}
   {\rm Re}\left\{\frac{z}{f(z)}\right\}>\frac{1}{2}\quad(z\in\Delta)
 \end{equation*}
 and
  \begin{equation}\label{f z - 1 1}
    \left|\frac{f(z)}{z}-1\right|<1\quad(z\in \Delta).
  \end{equation}
  The results are sharp for the function $z\mapsto z+z^2$.
\end{theorem}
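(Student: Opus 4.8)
The plan is to adapt the method of the preceding theorem, replacing the auxiliary function by one tailored to the quantity $z/f(z)$. First I would define
\begin{equation*}
  p(z) = \frac{2z}{f(z)} - 1 \quad (z \in \Delta),
\end{equation*}
which is analytic with $p(0) = 1$ since $f(z)/z \to 1$ as $z \to 0$; because $f(z) = z + a_2 z^2 + \cdots$, the expansion of $p$ about the origin starts at some order $m \geq 1$. Solving gives $f(z)/z = 2/(1 + p(z))$ and $z/f(z) = (1 + p(z))/2$, whence $\mathrm{Re}\{z/f(z)\} > 1/2$ is equivalent to $\mathrm{Re}\{p(z)\} > 0$, and likewise $|f(z)/z - 1| = |(1 - p(z))/(1 + p(z))| < 1$ is equivalent to $\mathrm{Re}\{p(z)\} > 0$. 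Thus both assertions reduce to proving that $\mathrm{Re}\{p(z)\} > 0$ throughout $\Delta$.

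Next I would rewrite the hypothesis in terms of $p$. Logarithmically differentiating $f(z)/z = 2/(1 + p(z))$ yields the analogue of \eqref{1+zf'' p(Z)},
\begin{equation*}
  \frac{zf'(z)}{f(z)} = 1 - \frac{zp'(z)}{1 + p(z)} = 1 - \frac{zp'(z)}{p(z)}\cdot\frac{p(z)}{1 + p(z)} .
\end{equation*}
Then I would argue by contradiction as in the preceding theorem: if $\mathrm{Re}\{p\} > 0$ were to fail, there would be a first point $z_0$ with $\mathrm{Re}\{p(z)\} > 0$ for $|z| < |z_0|$, $\mathrm{Re}\{p(z_0)\} = 0$ and $a := |p(z_0)| \neq 0$, so that Lemma \ref{lem Nuno} applies with $z_0 p'(z_0)/p(z_0) = ik$.

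In the case $p(z_0) = ia$, substituting $z_0 p'(z_0)/p(z_0) = ik$ into the identity above and simplifying $p(z_0)/(1 + p(z_0)) = ia/(1 + ia)$ gives
\begin{equation*}
  \mathrm{Re}\left\{\frac{z_0 f'(z_0)}{f(z_0)}\right\} = 1 + \frac{ka}{1 + a^2}.
\end{equation*}
Invoking the bound $k \geq (1 + a^2)/(2a)$ from Lemma \ref{lem Nuno}, the right-hand side is at least $1 + \tfrac{1}{2} = \tfrac{3}{2}$, contradicting \eqref{re zf' f 32}; the case $p(z_0) = -ia$ is symmetric, producing $\mathrm{Re}\{z_0 f'(z_0)/f(z_0)\} = 1 - ka/(1 + a^2) \geq \tfrac{3}{2}$ via $k \leq -(1 + a^2)/(2a)$. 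I expect this real-part estimate to be the heart of the argument, the decisive point being that it lands exactly on the constant $\tfrac{3}{2}$ uniformly in $a$; consequently, in contrast with the preceding theorem, no monotonicity argument or case split on the size of $a$ is needed. This contradiction forces $\mathrm{Re}\{p(z)\} > 0$ in $\Delta$, which delivers both conclusions.

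Finally, I would confirm sharpness by direct computation with $f(z) = z + z^2 = z(1 + z)$. Here $zf'(z)/f(z) = 1 + z/(1 + z)$ has real part strictly below $\tfrac{3}{2}$ because $\mathrm{Re}\{1/(1 + z)\} > \tfrac{1}{2}$ on $\Delta$, so the hypothesis \eqref{re zf' f 32} holds with the extremal constant; at the same time $z/f(z) = 1/(1 + z)$ satisfies $\mathrm{Re}\{z/f(z)\} > \tfrac{1}{2}$ with the bound approached as $z \to -1$, while $|f(z)/z - 1| = |z|$ approaches $1$, so neither inequality can be improved.
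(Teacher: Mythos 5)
Your proof is correct and follows essentially the same route as the paper: the same auxiliary function $p(z)=2z/f(z)-1$, the same identity $\frac{zf'(z)}{f(z)}=1-\frac{zp'(z)}{p(z)}\cdot\frac{p(z)}{1+p(z)}$, and the same contradiction via Nunokawa's lemma, with the estimate landing exactly on the constant $\tfrac{3}{2}$. The only differences are cosmetic: you write out the $p(z_0)=-ia$ case and the sharpness verification for $z+z^2$ explicitly, both of which the paper omits as routine.
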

\begin{proof}
  We define the function $p(z)$ as follows
  \begin{equation}\label{p(z) fz z}
    p(z):=\frac{2z}{f(z)}-1,
  \end{equation}
  with $p(0)=1$. A simple calculation gives us
  \begin{equation}\label{zf' 1-zp}
    \frac{zf'(z)}{f(z)}=1-\frac{zp'(z)}{p(z)}\frac{p(z)}{1+p(z)}.
  \end{equation}
   If there is a point $z_0\in\Delta$ such that
  \begin{equation*}
    \left|\frac{f(z)}{z}-1\right|<1\quad(|z|<|z_0|)
  \end{equation*}
  and
    \begin{equation*}
    \left|\frac{f(z_0)}{z_0}-1\right|=1,
  \end{equation*}
  then by \eqref{f z - 1 1}, we get ${\rm Re}\{p(z)\}>0$ when $|z|<|z_0|$, ${\rm Re}\{p(z_0)\}=0$ and $p(z_0)\neq0$. Now, by the Nunokawa lemma, we have
  \begin{equation*}\label{z0p'z0 2}
    \frac{z_0p'(z_0)}{p(z_0)}=ik,
  \end{equation*}
  where
  \begin{equation}\label{kgeq1 2}
    k\geq \frac{1+a^2}{2a}\quad{\rm when}\quad p(z_0)=ia
  \end{equation}
  and
  \begin{equation}\label{kleq1 2}
    k\leq -\frac{1+a^2}{2a}\quad{\rm when}\quad p(z_0)=-ia.
  \end{equation}
  We investigate the case \eqref{kgeq1 2} and let $a>0$. The proof of another case \eqref{kleq1 2} is similar. From \eqref{zf' 1-zp} and \eqref{kgeq1 2}, we obtain
  \begin{align*}
    {\rm Re}\left\{\frac{z_0f'(z_0)}{f(z_0)}\right\}
    &={\rm Re}\left\{1-\frac{z_0p'(z_0)}{p(z_0)}\frac{p(z_0)}{1+p(z_0)}\right\}\\
    &={\rm Re}\left\{1+\frac{ka}{1+ia}\right\} \\
    &= 1+\frac{ka}{1+a^2}\\
    &\geq \frac{3}{2}
  \end{align*}
  and this contradicts the hypothesis \eqref{re zf' f 32}. Ergo, we have
  \begin{equation*}
   {\rm Re}\{p(z)\}>0\quad(z\in\Delta)
 \end{equation*}
 or equivalently
  \begin{equation*}
   {\rm Re}\left\{\frac{z}{f(z)}\right\}>\frac{1}{2}\quad(z\in\Delta).
 \end{equation*}
  Finally, \eqref{p(z) fz z} implies that
  \begin{equation*}
    \left|\frac{f(z)}{z}-1\right|=\left|\frac{1-p(z)}{1+p(z)}\right|<1\quad(|z|<1)
  \end{equation*}
  and concluding the proof.
\end{proof}
It is well--known that (see \cite{Marx, Str}) if
  \begin{equation*}
    {\rm Re}\left\{1+\frac{zf''(z)}{f'(z)}\right\}> 0\quad(z\in \Delta),
  \end{equation*}
then ${\rm Re}\{\sqrt{f'(z)}\}>1/2$ and ${\rm Re}\{f(z)/z\}>1/2$. Alos, these estimates are sharp. Now by using the Alexander theorem, we have the following.
\begin{theorem}
    Let $f$ be of the form \eqref{f} and satisfies
  \begin{equation*}
    {\rm Re}\left\{1+\frac{zf''(z)}{f'(z)}\right\}< \frac{3}{2}\quad(z\in \Delta).
  \end{equation*}
  Then
    \begin{equation*}
   {\rm Re}\left\{\frac{1}{f'(z)}\right\}>\frac{1}{2}\quad(z\in\Delta)
 \end{equation*}
 and
  \begin{equation*}
    \left|f'(z)-1\right|<1\quad(z\in \Delta).
  \end{equation*}
  The results are sharp for the function $z\mapsto z+z^2/2$.
\end{theorem}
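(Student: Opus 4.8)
The plan is to reduce this statement to the previous theorem (the one with hypothesis \eqref{re zf' f 32}) by means of the Alexander transformation, which is exactly why that theorem was recalled just before the present statement. First I would introduce the auxiliary function
\begin{equation*}
  g(z):=zf'(z)\quad(z\in\Delta).
\end{equation*}
Since $f$ has the form \eqref{f}, we have $f'(z)=1+2a_2z+\cdots$, whence $g(z)=z+2a_2z^2+\cdots$; in particular $g(0)=0$ and $g'(0)=1$, so $g\in\mathcal{A}$ is again of the form \eqref{f}, and the earlier theorem is applicable to it.

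The key step is the logarithmic-derivative identity. Differentiating $g(z)=zf'(z)$ gives $g'(z)=f'(z)+zf''(z)$, and therefore
\begin{equation*}
  \frac{zg'(z)}{g(z)}=\frac{z\left(f'(z)+zf''(z)\right)}{zf'(z)}=1+\frac{zf''(z)}{f'(z)}\quad(z\in\Delta).
\end{equation*}
Consequently the hypothesis ${\rm Re}\{1+zf''(z)/f'(z)\}<3/2$ is precisely ${\rm Re}\{zg'(z)/g(z)\}<3/2$, i.e. the condition \eqref{re zf' f 32} of the preceding theorem with $g$ in place of $f$. I would then invoke that theorem verbatim for $g$, obtaining
\begin{equation*}
  {\rm Re}\left\{\frac{z}{g(z)}\right\}>\frac{1}{2}\quad\text{and}\quad\left|\frac{g(z)}{z}-1\right|<1\quad(z\in\Delta).
\end{equation*}
It only remains to translate these back through the relations $g(z)/z=f'(z)$ and $z/g(z)=1/f'(z)$, which turn the two displays into ${\rm Re}\{1/f'(z)\}>1/2$ and $|f'(z)-1|<1$; the latter exhibits $f$ as close-to-convex.

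For the sharpness claim I would follow the extremal function through the same substitution. The preceding theorem is sharp for $g(z)=z+z^2$, and solving $zf'(z)=z+z^2$ gives $f'(z)=1+z$, hence $f(z)=z+z^2/2$, which matches the stated extremal map. For this $f$ one has $f'(z)-1=z$ and $1/f'(z)=1/(1+z)$, so both inequalities degenerate to equalities as $|z|\to1$, confirming sharpness.

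I do not anticipate a genuine obstacle: the whole content is the identity for $zg'/g$ together with the observation that $g=zf'$ is normalized, after which the conclusion is immediate from the previous theorem. The only points requiring mild care are verifying $g\in\mathcal{A}$ (so the earlier result applies) and checking that the two conclusions translate cleanly under $g=zf'$; both are routine.
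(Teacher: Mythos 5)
Your proposal is correct and is exactly the argument the paper intends: the theorem is stated immediately after the remark "by using the Alexander theorem," and your reduction via $g(z)=zf'(z)$ with the identity $zg'(z)/g(z)=1+zf''(z)/f'(z)$ is precisely that Alexander-type transfer of the preceding theorem. The sharpness check via $g(z)=z+z^2$ giving $f(z)=z+z^2/2$ also matches the paper's extremal function.
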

Finally, using the Nunokawa lemma we obtain the following well--known result, see \cite[p. 73]{Duren}.
\begin{theorem}
  Let $f\in \mathcal{A}$ be starlike function of order $1/2$.
  Then
    \begin{equation*}
   {\rm Re}\left\{\frac{f(z)}{z}\right\}>\frac{1}{2}\quad(z\in\Delta)
 \end{equation*}
 and
  \begin{equation*}
    \left|\frac{z}{f(z)}-1\right|<1\quad(z\in \Delta).
  \end{equation*}
  The results are sharp for the function $z\mapsto \frac{z}{1+z}$.
\end{theorem}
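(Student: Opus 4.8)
The plan is to imitate the proof of Theorem 2.2 almost verbatim, but with the roles of $z/f(z)$ and $f(z)/z$ interchanged so that the conclusion itself becomes the positivity statement driving the argument. Concretely, I would set
\[
  p(z):=\frac{2f(z)}{z}-1\qquad(z\in\Delta),
\]
which is analytic in $\Delta$ with $p(0)=1$ (since $f\in\mathcal A$ gives $f(z)/z=1+a_2z+\cdots$), so $p(z)=1+\sum_{n\ge m}c_nz^n$ with $m\ge1$. The point of this normalization is that $\mathrm{Re}\{f(z)/z\}>\tfrac12$ is \emph{equivalent} to $\mathrm{Re}\{p(z)\}>0$, and likewise $|z/f(z)-1|<1$ is equivalent to $\mathrm{Re}\{p(z)\}>0$ via the Möbius identity $z/f(z)-1=(1-p)/(1+p)$. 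Thus both desired conclusions reduce to the single claim $\mathrm{Re}\{p(z)\}>0$ on $\Delta$.

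Next I would record the logarithmic-derivative relation. From $f(z)/z=(1+p(z))/2$ one gets, exactly as in \eqref{zf' 1-zp} but with opposite sign, the identity $\dfrac{zf'(z)}{f(z)}=1+\dfrac{zp'(z)}{1+p(z)}$. Then I would argue by contradiction: if $\mathrm{Re}\{p\}$ is not positive throughout $\Delta$, there is a first boundary point $z_0$ with $\mathrm{Re}\{p(z)\}>0$ for $|z|<|z_0|$, $\mathrm{Re}\{p(z_0)\}=0$ and $a:=|p(z_0)|\ne0$. Lemma \ref{lem Nuno} (with $m\ge1$) then gives $z_0p'(z_0)/p(z_0)=ik$ with $k\ge (1+a^2)/(2a)$ when $p(z_0)=ia$, and $k\le-(1+a^2)/(2a)$ when $p(z_0)=-ia$.

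Substituting $z_0p'(z_0)=ik\,p(z_0)$ into the identity above yields $\dfrac{z_0f'(z_0)}{f(z_0)}=1+\dfrac{ik\,p(z_0)}{1+p(z_0)}$. In the case $p(z_0)=ia$ this is $1-\dfrac{ka}{1+ia}$, whose real part is $1-\dfrac{ka}{1+a^2}\le 1-\tfrac12=\tfrac12$ by the bound on $k$; in the case $p(z_0)=-ia$ it is $1+\dfrac{ka}{1-ia}$, with real part $1+\dfrac{ka}{1+a^2}\le\tfrac12$ since now $ka/(1+a^2)\le-\tfrac12$. Either way $\mathrm{Re}\{z_0f'(z_0)/f(z_0)\}\le\tfrac12$, contradicting the hypothesis $f\in\mathcal S^*(1/2)$, i.e. $\mathrm{Re}\{zf'(z)/f(z)\}>\tfrac12$. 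Hence $\mathrm{Re}\{p(z)\}>0$ on $\Delta$, which gives both $\mathrm{Re}\{f(z)/z\}>\tfrac12$ and $|z/f(z)-1|<1$. For sharpness I would check $f(z)=z/(1+z)$: here $zf'(z)/f(z)=1/(1+z)$ has real part exceeding $\tfrac12$ (so $f\in\mathcal S^*(1/2)$), while $f(z)/z=1/(1+z)$ attains real part $\tfrac12$ as $z$ approaches the boundary near $-1$.

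I do not expect a serious obstacle; the computation is routine once the correct $p$ is chosen, and the decisive step is simply reading off $\mathrm{Re}\{z_0f'(z_0)/f(z_0)\}\le\tfrac12$ in both cases. The only point meriting a word of care is the technical hypothesis $p(z_0)\ne0$ needed to invoke Lemma \ref{lem Nuno}, which is exactly the condition $a\ne0$ built into the contradiction setup, handled as in Theorems 2.1 and 2.2.
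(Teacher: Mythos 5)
Your proposal is correct and follows exactly the route the paper intends: its entire proof is the one-line instruction to set $p(z)+1=2f(z)/z$ and apply Nunokawa's lemma, and you have simply carried out that computation in full (the identity $zf'/f=1+zp'/(1+p)$, the bound $\mathrm{Re}\{z_0f'(z_0)/f(z_0)\}\le 1/2$ in both cases, and the sharpness check for $z/(1+z)$ are all as the authors evidently intend). No discrepancy to report.
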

\begin{proof}
  It is enough to define $p(z)+1=2f(z)/z$ and to apply the Nunokawa lemma, then we get the desired result.
\end{proof}


\end{document}